\newtheorem{theorem}{Theorem}[section]
\newtheorem{proposition}[theorem]{Proposition}
\def\A{{\mathbf A}}
\def\B{{\mathbf B}}
\def\C{{\mathbf C}}
\def\N{{\mathbf N}}
\def\Oo{{\mathcal O}}
\def\P{{\mathbf P}}
\def\R{\mathbf R}
\def\int{\mathrm{int}}
\providecommand{\bysame}{\leavevmode\hbox to3em{\hrulefill}\thinspace}
\begin{document}
\bibliographystyle{amsplain}
\setcounter{tocdepth}{2}
\title{Some ideas in need of clarification\\in resolution of singularities and the geometry of discriminants}
\author{Bernard Teissier}
\address{Universit\'e Paris Cit\'e and Sorbonne Universit\'e, CNRS, IMJ-PRG, F-75013 Paris, France}

\email{bernard.teissier@imj-prg.fr}

\keywords{Resolution of singularities, geometry of discriminants}

\pagestyle{myheadings}
\markboth{\rm Bernard Teissier}{\rm Ideas in need of clarification}
\renewcommand\rightmark{B. Teissier}
\renewcommand\leftmark{Ideas in need of clarification}

\maketitle
\textit{\small Mathematics only exists in a living community of mathematicians that spreads understanding and breathes life into ideas both old and new. The real satisfaction from mathematics is in learning from others and sharing with others. All of us have clear understanding of a few things and murky concepts of many more. There is no way to run out of ideas in need of clarification.}\par\medskip\noindent
\hfill \textit{W. P. Thurston, quoted in \cite{L-P}}\par\bigskip\noindent
 { \small Among the different ways of sharing ideas are discussions, letters, videos, blogs, platforms such as MathOverflow or Images des Math\'ematiques, and publications. Publications are subject to more precise rules, and require more prolonged effort because they are not only a means of communication but also the main repository of ideas and results.\par That is why the  activity of Catriona over four decades has been so useful for the mathematical community. She has a unique way of imagining possible publications, encouraging without pushing, showing great patience and understanding adapted to each author (or editor). She possesses an amazingly rich perception of the mathematical community, knowing of so many mathematicians not only what they do, but also what they are. Catriona really cares about authors (or editors) as persons as well as about the quality of the texts. Adding to this an inexhaustible energy, Catriona plays a unique and very important role in the spreading of understanding, as Thurston writes, and thus for the progress of our science. As an expression of gratitude and friendship, I wish to dedicate to her an exposition of some of the problems I have come across and so illustrate the last sentence of Thurston's quote. }\par\medskip\noindent
\begin{abstract}{This text presents problems in two different areas of algebraic geometry. The first concerns the role of ``infinitely singular" or ``non Abhyankar" valuations in the study of local uniformization of valuations with a view to resolution of singularities in positive characteristic. The second one concerns the relationship of the geometry of the discriminant of real miniversal unfoldings in the sense of Thom with the movements of Morse functions on a cobordism which differential geometers use. }
\end{abstract}
\section{Problems related to resolution of singularities}
In his 1964 paper Hironaka introduced the general concept of \textit{embedded resolution} of a singular space $X$ embedded in a non singular variety $Z$. That is a birational morphism $b\colon Z'\to Z$ with $Z'$ non singular, such that the strict transform of $X$ is non singular and transversal in $Z'$ to the exceptional divisor of $b$, which is mapped to the singular locus of $X$. Indeed, Hironaka's proof builds $Z'$ as the result of a sequence of blowing-ups with non singular centers. If $X$ is a toric variety equivariantly embedded in a non singular toric variety $Z$, it is more natural to seek a birational toric morphism $Z'\to Z$ of non singular toric varieties such that the strict transform of $X$ is non singular and transversal to the toric boundary of $Z'$. For toric varieties over an algebraically closed field, this was proved to exist in \cite[\S 6]{T5} and \cite{GP-T}. The process is purely combinatorial and therefore blind to the characteristic of the field.\par
If $\Oo$ is the local ring of a formal branch $C$ over an algebraically closed field $k$, its normalization is $k[[t]]$ and the set of values which the $t$-adic valuation $\nu$ takes on elements of $\Oo$ is a numerical semigroup $\Gamma\subset \N$. It is finitely generated. For tradition's sake, we denote by $g+1$ its minimal number of generators. The associated graded ring ${\rm gr}_\nu\Oo$ of the valuation $\nu$ restricted to $\Oo$ (see \cite[\S 2]{T5}, \cite{T1}) is isomorphic to the semigroup algebra $k[t^\Gamma]$  of $\Gamma$ with coefficients in $k$ and thus corresponds to an affine toric variety $C^\Gamma\subset \A^{g+1}(k)$. If $\xi_0,\xi_1,\ldots ,\xi_g$ are elements of $\Oo$ whose $\nu$-values generate $\Gamma$, the image of the formal embedding of $C$ in the affine space $\A^{g+1}(k)$ determined by the $\xi_i$ can be degenerated to $C^\Gamma$ inside $\A^{g+1}(k)$ in such a way that some toric embedded resolutions of $C^\Gamma\subset \A^{g+1}(k)$ also give embedded resolutions of $C\subset \A^{g+1}(k)$. All this is blind to the characteristic of the field $k$. An instance of this, in the complex analytic world, first appeared in \cite{G-T} and recently the case of reduced plane curve singularities has been settled (and more) in \cite{dF-GP-M} and also in \cite[Corollary 7.11]{C-PP-S}.\par\medskip
An attempt to generalize this leads to the following, where $k$ is an algebraically closed field.\par\medskip\noindent
\textbf{Problem A:} \textit{Let $X\subset \A^n(k)$ be a reduced affine algebraic variety over $k$. Do there exist algebraic embeddings $\A^n(k)\subset \A^N(k)$ such that:
\begin{enumerate} 
\item The intersection of the image of $X$ (resp. $\A^n(k)$) with the torus of $\A^N(k)$ is dense in $X$ (resp. $\A^n(k)$);
\item There exist birational equivariant maps $\pi \colon Z\to \A^N(k)$ of non singular toric varieties such that the strict transform $X^\pi$ of $X$ (resp. the strict transform of $\A^n(k)$), which exists by 1., is non singular and transversal to the toric boundary of $Z$.
\item The ideal in the ring $\Oo_X(X)$ of the singular subspace of $X$ is generated, up to integral closure, by monomials in the coordinates of $\A^N(k)$.
\end{enumerate}}
If the embedding $X\subset \A^n(k)\subset \A^N(k)$ satisfies the first two conditions, we call it a \textit{torific embedding} for $X$. For example, an isolated hypersurface singularity which is non degenerate with respect to its Newton polyhedron is torifically embedded in its ambient space. See \cite{Tev2} and \cite{A-GM-M} for generalizations.
Tevelev has shown in \cite{Tev1} that any embedded resolution diagram of irreducible projective varieties
\[\xymatrix{X'\ar[r]\ar[dd]^{\pi_X}& Z'\ar[dd]^\pi
           &  \\
             & &  \\
            X\ar[r]  & Z &}\]
can be embedded in a diagram:
\[\xymatrix{X'\ar[r]\ar[dd]^{\pi_X}& Z'\ar[r]\ar[dd]^\pi &W'\ar[dd]^\Pi
           &  \\
             & &  \\
            X\ar[r]  & Z \ar[r]&\P^N},\]
where $Z\to \P^N$ is an embedding, the map $\Pi$ is a birational toric map of non-singular varieties for a toric structure on $\P^N$, the images of $Z$ and $X$ have dense intersections with the torus of $\P^N$, their strict transforms are the non singular varieties $Z'$ and $X'$, and they are transversal to the toric boundary of $W'$.\par\noindent
In this sense, in characteristic zero where we have Hironaka's theorem, and more generally whenever embedded resolution can be proved, toric embedded resolutions are cofinal among embedded resolutions of a given irreducible projective variety $X$.  \par\medskip
\textit{Coming back to affine or local torific embeddings, the problem of course is to prove the existence of torific embeddings without assuming embedded resolution, in a way which hopefully would also work in positive characteristic.} As a bonus, torific embeddings should, as in the case of curves, contain important geometric information on the singularities of $X$, which do not seem, in dimension $\geq 3$, to be legible in the resolution by blowing ups. To my knowledge there are two approaches to this problem:\par\medskip\noindent
- Mourtada's approach (see \cite{Mo},  \cite{M-P}) is based on a deep vision of the relationship between components of the exceptional divisor (divisorial valuations centered in $Z$) of an embedded resolution of $X$ and contact subvarieties of the jet schemes on $Z$ associated to the embedding $X\subset Z$. Suitable irreducible components of the contact varieties mentioned above correspond to divisorial valuations centered in $\A^n(k)$ and the equations of each one give an embedding $\A^n(k)\subset\A^N(k)$ such that the divisorial valuation is the trace on $\A^n(k)$ of a \textit{monomial} divisorial valuation on $\A^N(k)$. Then a tropical/toroidal argument explains how to produce a torification. Anyway, that is the idea, and it proves extremely fruitful in spite of the complexity of the computation of the equations of irreducible components. Mourtada realizes this program in a number of important cases, which I shall not detail here.\par\medskip\noindent
- My approach is more directly inspired by the case of curves presented above. As in Zariski's approach, it begins with local uniformization of valuations. The reason is that if $\Oo_{X,x}$ is the local algebra of a singularity and $\nu$ is a rational\footnote{It means that the inclusion $R\subset R_\nu$ of $R$ in the valuation ring of the valuation not only satisfies $m_\nu\cap R=m_R$ for maximal ideals, but also there is no residual extension: $k=R/m_R=k_\nu =R_\nu/m_\nu$.} valuation centered in $\Oo_{X,x}$, the associated graded algebra ${\rm gr}_\nu\Oo_{X,x}$ is again isomorphic to the semigroup algebra $k[t^\Gamma]$ of the semigroup $\Gamma$ of values taken by $\nu$ on $\Oo_{X,x}$.\par\noindent If $\Gamma$ is finitely generated, then we have again an affine toric variety and we can show that toric resolutions of this toric variety, which are blind to the characteristic, provide local uniformization of the valuation after a suitable re-embedding of $(X,x)$ (see \cite{T1}).\par However, the semigroup $\Gamma$ is not at all finitely generated in general and we have to think of ${\rm Spec}k[t^\Gamma]$ as being of infinite embedding dimension, this embedding dimension being in fact an ordinal, see \cite[corollary 3.10]{T1}. Such a toric variety is defined by an infinite collection of binomials and does not have a resolution so we have to show that a \textit{``finite partial" embedded resolution} extends to a local uniformization of $\nu$. In order to do that we need equations for the degeneration of $\Oo_{X,x}$ to its graded algebra. This is something we can do for \textit{complete} equicharacteristic noetherian local domains.\par\noindent
Indeed, if the noetherian equicharacteristic local domain $R$ is complete, there exists for any rational valuation of $R$ an embedding of the formal space corresponding to $R$ into the space where the generalized toric variety corresponding to ${\rm gr}_\nu R$ resides; it is given by the Valuative Cohen Theorem of \cite{T1}.\par\noindent Since $R$ is noetherian, the semigroup $\Gamma$ is well ordered and combinatorially finite in the sense that there are finitely many distinct expressions of an element of $\Gamma$ as a sum of other elements. As a consequence of being well ordered it has a unique minimal system of generators $(\gamma_i)_{i\in I}$, the index set $I$ being an ordinal $\leq\omega^{h(\nu)}$ where $h(\nu)$ is the height, or rank, of the valuation, which is $\leq{\rm dim}R$. Taking variables $(u_i)_{i\in I}$, one can consider the $k$-vector space of all formal sums $\Sigma_{e\in E}d_eu^e$ where $E$ is any set of monomials in the $u_i$ and $d_e\in k$.  Since the values semigroup $\Gamma$ is well ordered and combinatorially finite this vector space is in fact, with the usual multiplication rule, a $k$-algebra $\widehat{k[(u_i)_{i\in I}]}$ which we endow with a weight $w$ by giving $u_i$ the weight $\gamma_i$. Combinatorial finiteness means that there are only finitely many monomials with a given weight, and we can enumerate them according to the lexicographic order of exponents (see \cite[\S 4]{T1}). Thus, we can embed the set of monomials $u^m$ in the well ordered lexicographic product $\Gamma\times \N$. Combinatorial finiteness also implies that the initial form of every series with respect to the weight filtration is a polynomial so that the corresponding graded algebra of $\widehat{k[(u_i)_{i\in I}]}$ is the polynomial algebra $k[(U_i)_{i\in I}]$ with $U_i={\rm in}_wu_i$, graded by giving $U_i$ the degree $\gamma_i$.\par
The $k$-algebra $\widehat{k[(u_i)_{i\in I}]}$ is endowed with a monomial valuation given by the weight: $w(\Sigma_{e\in E}d_eu^e)={\rm min}_{d_e\neq 0}w(u^e) $. This valuation is rational since all the $\gamma_i$ are $>0$. Note that $0$ is the only element with value $\infty$ because here $\infty$ is an element larger than any element of $\Gamma$. With respect to the "$w$ ultrametric" given by $u(x,y)=w(y-x)$, the algebra $\widehat{k[(u_i)_{i\in I}]}$ is spherically complete (see \cite{T3}, theorem 4.2) and has most of the properties of power series algebras, except for noetherianity unless the set $I$ is finite, in which case it is isomorphic to the usual power series ring, with weights on the variables.\par\noindent
The $\gamma_i$ are the degrees of a minimal set of homogeneous generators $(\overline{\xi_i})_{i\in I}$ of the $\Gamma$-graded $k$-algebra ${\rm gr}_\nu R$. The first part of the valuative Cohen theorem asserts that one can choose representatives $(\xi_i)_{i\in I}$ in $R$ of the $(\overline{\xi_i})_{i\in I}$ in such a way that $u_i\mapsto\xi_i $ determines a surjective continuous (with respect to the valuations) map of $k$-algebras
$$\pi\colon \widehat{k[(u_i)_{i\in I}] }\rightarrow R$$
whose associated graded map with respect to the filtrations associated to the valuations is the surjective graded map of $k$-algebras
$${\rm gr}\pi\colon k[(U_i)_{i\in I}]\rightarrow {\rm gr}_\nu R,\ \ U_i\mapsto \overline{\xi_i}.$$
If the valuation $\nu$ is of rank one or the set $I$ is finite, any set of representatives $(\xi_i)_{i\in I}$ of the $(\overline{\xi_i})_{i\in I}$ is eligible.\par\noindent
Since even when the set $I$ is infinite the non zero homogeneous components of ${\rm gr}_\nu R$ are one dimensional $k$-vector spaces, and in fact ${\rm gr}_\nu R$ is isomorphic to the semigroup algebra $k[t^\Gamma]$ (see \cite[Proposition 4.7]{T5}), the kernel of the map ${\rm gr}\pi$ is a prime ideal generated by binomials $(U^m-\lambda_{mn}U^n)_{(m,n)\in M}$ with $\lambda_{mn}\in k^*$, and the second part of the valuative Cohen theorem states that the kernel of $\pi$ is generated, up to closure in the $w$ ultrametric, by \textit{overweight deformations} of those binomials: series whose initial forms with respect to the weight are those binomials.\par\noindent
Geometrically this corresponds to equations defining the image of an embedding, via the series $\xi_i$, of the formal germ corresponding to $R$ in an infinite dimensional weighted affine space in such a way that the original valuation is the trace on $R$ of a monomial valuation on the ambient space.\par\noindent
In this infinite dimensional space, our singularity can be degenerated in a faithfully flat way to the ``toric variety'' defined by the binomials $U^m-\lambda_{mn}U^n$ of $k[(U_i)_{i\in I}]$ (see \cite[Proposition 2.3]{T5}). However, if the number of variables is infinite, there is no resolution of singularities for such a generalized toric variety. It is truly ``infinitely singular" and in fact for valuations centered in $k[[x,y]]$ this corresponds exactly to the ``infinitely singular" case where the valuation is the order of vanishing of a series on a very transcendental (non Puiseux) curve in the plane whose strict transforms remains singular in infinitely many point blowing ups (see \cite[Examples 4.20 and 4.22]{T5}).\par On the other hand, we know that for rational Abhyankar valuations (= of rational rank equal to the dimension of $R$), one can prove that  after a birational modification of $R$ to an $R'$ still dominated by the valuation ring of $\nu$, we can obtain that the semigroup of values of the valuation $\nu$ on $R'$ is finitely generated. For this reason, rational Abhyankar valuations on an equicharacteristic excellent local domain with an algebraically closed residue field can be uniformized (see \cite{T1}, and \cite{K-K} and \cite{C2} for different approaches for algebraic function fields). This leads to the following conjecture for non-Abhyankar valuations, whose semigroup cannot be finitely generated (see also \cite{T6}):\par\medskip\noindent
\textit{Let $R$ be a complete equicharacteristic local domain with algebraically closed residue field and $\nu$ a rational valuation centered in $R$ and of rational rank $r<{\rm dim}R$. Let $(\gamma_i)_{i\in I}$ be the minimal system of generators for the semigroup $\Gamma$ of $\nu$ on $R$. There exist a nested system of finite subsets $B_\alpha\subset I$ with $\bigcup_\alpha B_\alpha=I$ and for each $B_\alpha$ a prime ideal $K_\alpha$ of $R$ such that $R/K_\alpha$ is of dimension $r$ and endowed with an Abhyankar valuation $\nu_\alpha$ whose semigroup is generated by the $(\gamma_i)_{i\in B_\alpha}$. We have $\bigcap_\alpha K_\alpha=(0)$ and for each $x\in R$ we have $\nu(x)=\nu_\alpha (x\ {\rm mod.}K_\alpha)$ for large enough $B_\alpha$. Finally each $R/K_\alpha$ is an overweight deformation of an affine toric variety and for large enough $B_\alpha$ toric embedded uniformizations of the valuation $\nu_\alpha$ also uniformize the valuation $\nu$ on $R$.}\par\medskip
It is a convenient way to express that the valuation can be uniformized by ``finite partial'' embedded toric resolutions of ${\rm Spec}k[t^\Gamma]$, an adapted form of torific embedding for the valuation. The slogan is: \textit{Approximating a rational non-Abhyankar valuation $\nu$ by rational Abhyankar semivaluations\footnote{A semivaluation of $R$ is a valuation of a quotient of $R$ by a prime ideal.} should provide torific embeddings for $\nu$}.\par
 In order to apply this to our algebraic situation we have to deduce a torific embedding for an algebraic local ring from a torific embedding of a complete local ring to which we can apply the valuative Cohen theorem. For that it suffices to solve the following problem (see \cite[*Proposition 5.19*]{T5} and \cite{HOST}):\par\medskip\noindent
\textbf{Problem B:} \textit{Given an excellent equicharacteristic local domain $R$ and a valuation $\nu$ centered in $R$, show that there exists a prime ideal $H$ of the $m_R$-adic completion $\hat R$ such that $H\cap R=(0)$ and $\nu$ extends to a  valuation $\hat\nu$ of $\hat R/H$ with the same value group.}\par\noindent
This means that the graded inclusion ${\rm gr}_\nu R\subset {\rm gr}_{\hat\nu}\hat R/H$ is birational.\par\medskip If the valuation is of rank one, the proof is in \cite[Theorem 2.1]{HOST}. For Abhyankar valuations, the proof is in \cite[7.2]{T1}.\par\noindent Conjecture 9.1 in \cite{HOST} to the effect that after a birational $\nu$-modification $R'$ of $R$ one can even have that the semigroup of $\hat R'/H'$ is the same as that of $R'$ has been disproved by Cutkosky in \cite[Theorem 1.5, Theorem 1.6]{C} even when completion is replaced by henselization.\par\medskip\noindent
Since we assume that the residue field $k$ is algebraically closed, local uniformization of rational valuations entails local uniformization for all valuations (see \cite[Proposition 3.20]{T5}). By (quasi-)compactness of the Riemann-Zariski manifold, the space of valuations centered in $R$ is (quasi-)compact and therefore there are finitely many valuations such that the collections of morphisms uniformizing them by toric embedded uniformizations uniformizes all valuations centered in $R$. Now the problem is to:\par\medskip\noindent
\textbf{Problem C:} \textit{Prove that those torific embeddings can be combined into one embedding for ${\rm Spec}R$ where a toric birational map will simultaneously uniformize all valuations and thus provide a local embedded resolution of singularities for $R$.}\par\noindent One can find some inspiration in \cite[\S 3]{dF-GP-M} as well as in the local tropicalization methods of \cite{PP-S} and \cite{C-PP-S}. 

\section{Problems related to the geometry of discriminants of miniversal unfoldings}
Let $f(z_1,\ldots ,z_n)\in \R\{z_1,\ldots ,z_n\}$ be a series without constant term and such that it has an algebraically isolated critical point at the origin, which means that ${\rm dim}_\R\R\{z_1,\ldots ,z_n\}/(\frac{\partial f}{\partial z_1},\ldots ,\frac{\partial f}{\partial z_n})<\infty$. This dimension is the Milnor number $\mu$ of the isolated critical point associated to the complexification of the series $f(z_1,\ldots ,z_n)$ (see \cite{Mi2}). A function with an algebraically isolated critical point is finitely determined, so we may assume that $f$ is a polynomial. Let us consider an unfolding of the function $f$, say
$$F(z,t)=f(z_1,\ldots ,z_n)+\sum_{k=1}^{\mu-1}t_kg_k(z_1,\ldots ,z_n),$$
which is miniversal (see \cite[Chap. 8]{A-GZ-V}) if the images of the functions $1,g_1,\ldots ,g_{\mu-1}$, which again we may take to be polynomials, even monomials, form a basis of the real vector space $\R\{z_1,\ldots ,z_n \}/(\frac{\partial f}{\partial z_1},\ldots ,\frac{\partial f}{\partial z_n})$, which we shall henceforth assume.\par\noindent
This unfolding defines a germ of a stable map (see \cite[Chap. III, Theorem 3.4]{G-W-dP-L} or \cite[Chap 5]{M-NB})$${\mathbf F}=(F,t) \colon (\R^n\times\R^{\mu-1},0)\rightarrow (\R\times\R^{\mu-1},0)$$  
expressed in the natural coordinates $(\lambda,t)=(\lambda, t_1,\ldots, t_{\mu-1})$ on $\R\times \R^{\mu-1}$  by: \begin{align*} \lambda\circ {\mathbf F}=&f(z_1,\ldots ,z_n)+\sum_{k=1}^{\mu-1}t_kg_k(z_1,\ldots ,z_n),\cr t_k\circ {\mathbf F}=&t_k\ \ {\rm for}\ k=1,\ldots ,\mu-1.\end{align*}
Because ${\mathbf F}$ is a stable map, it can be Thom-stratified (see \cite{M}, \cite{G-W-dP-L}) and there exist ``polycylinders'' $U=\B_n\times \B_{\mu-1}\subset \R^n\times\R^{\mu-1}$ and $V=\B_1\times\B_{\mu-1}\subset\R\times\R^{\mu-1}$ such that $F^{-1}(V)\cap U$ is a neighborhood of $0$ in which the critical locus $C$ is non singular, and $C\cap(\partial \B_n\times \B_{\mu-1})=\emptyset$. The only critical points which appear in that neighborhood are those which tend to $0$ as $t\to 0$, and each fiber ${\mathbf F}^{-1}(\lambda, t)$ for $(\lambda, t)\in V$ is transversal to $\partial \B_n\times \{t\}$. We shall freely assume that the closed balls $\B_e\subset \R^e$ are ``small enough''.\par
Now recall that, assuming of course that $0$ is a critical point,  up to a change of variables, our function can be written as $$f(z_1,\ldots ,z_n)=\sum_{j=1}^{q^+}z_i^2-\sum_{j=q^++1}^{q^++q^-}z_j^2+\tilde f(z_{q^++q^-+1},\ldots ,z_n),$$ where $\tilde f$ is of order $\geq 3$ and has the same Milnor number as $f$. \par\noindent
Then the algebra $\R\{z_1,\ldots ,z_n\}/(\frac{\partial f}{\partial z_1},\ldots ,\frac{\partial f}{\partial z_n})$ is naturally isomorphic to \break$\R\{z_{q^++q^-+1},\ldots ,z_n\}/(\frac{\partial \tilde f}{\partial z_{q^++q^-+1}},\ldots ,\frac{\partial \tilde f}{\partial z_n})$. A miniversal unfolding $\tilde F$ of $\tilde f$ is miniversal for $f$, the only difference between $F$ and $\tilde F$ being a fixed difference of indices between the Morse singularities appearing in the unfoldings. \par
From now on we shall assume that the order of $f(z_1,\ldots ,z_n)$ is $\geq 3$. Then we may choose $g_i=z_i$ for $i=1,\ldots, n$ and $g_k$ of order $\geq 2$ for $k>n$. The equations for the critical locus $C\subset \B_n\times\B_{\mu-1}$ of the unfolding $F$ are $$\frac{\partial F}{\partial z_i}=\frac{\partial f}{\partial z_i}+t_i+\sum_{k=n+1}^{\mu-1}t_k\frac{\partial g_k}{\partial z_i}=0\ {\rm for}\ i=1,\ldots ,n,$$
showing that $C$ is non singular and of dimension $\mu-1$.\par Shrinking the balls $\B_n,\ \B_{\mu-1}$ if necessary, we assume that the map \break$\nu\colon C\to \B\times\B_{\mu-1}$ induced by ${\mathbf F}$ is finite by the Weierstrass preparation theorem (an analytic map with a finite fiber is locally finite). Its image is the real part $D$ of a complex hypersurface, the discriminant $D(\C)$ of the complexification of the morphism ${\mathbf F}$ (see \cite[\S 5]{T2}). We have seen that $C$ is non singular, and the map $\nu\colon C\to D$ is finite. As image of $C$, and because all maps in sight are algebraic, the discriminant $D$ is a semialgebraic hypersurface in $\B_1\times\B_{\mu-1}$. \par
A miniversal unfolding of an algebraically isolated singularity of hypersurface is a versal deformation so that we can lift to $D$ the properties of discriminants of miniversal deformations, and in particular the product decomposition theorem of \cite[Chap. III, Th\'eor\`eme 2.1]{T7} and \cite[Theorem 4.8.1, Cor. 4.8.2]{T2} which remains true in real geometry and implies that non singular points of $D$ are the images of Morse singularities in $C$. It also implies that at a general point of the codimension one components of the singular locus of  the complexification $D(\C)$, the singular locus is locally isomorphic either to a cusp ($y^2-x^3=0$) times $\C^{\mu-2}$ (cusp type) or to a node ($y^2\pm x^2=0)$ times $\C^{\mu-2}$ (node type).
\begin{proposition}
The zero set in the critical locus $C$ of the hessian determinant $h_z(F)$ of $F$ with respect to the variables $z_1,\ldots, z_n$ is of codimension one. 
\end{proposition}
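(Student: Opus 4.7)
The plan is to exploit the fact, already noted in the text, that $C$ is non-singular of dimension $\mu-1$. Since $h_z(F)$ is an analytic function on the smooth manifold $C$, its zero locus is either empty, of pure codimension one, or all of $C$. I would rule out the two extremes.

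For non-emptiness, observe that $(0,0)\in C$: indeed $\partial F/\partial z_i(0,0)=\partial f/\partial z_i(0)=0$ since $f$ is of order $\geq 3$. At this point every second partial derivative of $F$ with respect to the $z_i$ reduces to $\partial^2 f/\partial z_i\partial z_j(0)$, which again vanishes because $f$ has order $\geq 3$. Hence $h_z(F)(0,0)=0$ and the zero set is non-empty.

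For the complementary direction I need to exhibit some point of $C$ at which $h_z(F)\neq 0$, i.e.\ a Morse critical point in the family. The most hands-on way is to restrict to the stratum $t_k=0$ for $k>n$, where the critical locus equations $\partial F/\partial z_i=0$ become $\nabla f(z)=-(t_1,\ldots,t_n)$. Sard's theorem applied to the analytic gradient map $z\mapsto \nabla f(z)$ guarantees that for generic small $(t_1,\ldots,t_n)$ any solution $z$ is a regular point of $\nabla f$, i.e.\ $\det\bigl(\partial^2 f/\partial z_i\partial z_j\bigr)(z)\neq 0$; at such a point $h_z(F)$ is non-zero. Alternatively, and more in the spirit of the surrounding discussion, one may invoke the product decomposition theorem cited just before the statement: simple points of $D(\C)$ are precisely the images of Morse points of $\mathbf{F}$, and the singular locus of $D(\C)$ has codimension at least one in $D(\C)$; since $\nu\colon C\to D$ is finite and surjective, the preimage of the smooth part of $D$ is a dense open subset of $C$ on which $h_z(F)\neq 0$.

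Combining these two facts, $\{h_z(F)=0\}$ is a non-empty proper analytic subset of the smooth $(\mu-1)$-dimensional manifold $C$ cut out by a single non-identically-zero analytic function, hence of pure codimension one. The only substantive step is the second one, producing a Morse point in the versal family; the Sard-theoretic argument (or, equivalently, the appeal to the discriminant's product decomposition) settles it without further work.
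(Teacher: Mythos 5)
Your argument has a genuine gap at the very step you describe as the safe, formal one. You write: ``Since $h_z(F)$ is an analytic function on the smooth manifold $C$, its zero locus is either empty, of pure codimension one, or all of $C$.'' This trichotomy holds for holomorphic functions on a complex manifold (it is essentially Krull's principal ideal theorem), but it is false in the real analytic category. The simplest counterexample is $h = x^2 + y^2$ on $\R^2$: its zero set is a single point, which is of codimension two. More to the point, the complexification of $h$ still vanishes in codimension one, but the real locus drops dimension because the relevant branches are not real. So showing that $\{h_z(F)=0\}$ is non-empty and proper, which your first two paragraphs do correctly, does not pin down its real codimension.

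This is exactly the difficulty the paper's proof is built to sidestep, and it does so by invoking the geometry of the discriminant rather than a general principle about analytic sets. The proof observes that the image under $\nu$ of the hessian zero set is the closure of the cusp-type part of the singular locus of $D$ (the node-type points come from two distinct Morse critical points, where the hessian does not vanish). The decisive extra fact is that a real point of cusp type is locally a cusp $y^2 - x^3 = 0$ times $\R^{\mu-2}$, and the real cusp $y^2=x^3$ is a genuine real curve (codimension one), in contrast to a node of the form $y^2+x^2=0$, whose real locus is a point. So the real codimension of the image, and hence of the zero set in $C$ via finiteness of $\nu$, is one. Your argument reaches the complex statement but never rules out the possibility that the real hessian zero set collapses in dimension in the way $x^2+y^2$ does; the cusp-type identification is what closes that door.
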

\begin{proof} In the real space or in the complexification the image of the zero set of the hessian is the part of the singular locus which is the closure of the set of points of cusp type. The real part of a complex point of cusp type is a real point of cusp type, locally isomorphic to a cusp times $\R^{\mu-2}$. \end{proof}
 The singular locus of $D$ is of codimension one and its image $\Delta$ in $\B_{\mu-1}$ is a semialgebraic hypersurface containing the bifurcation locus $\Sigma$ and the conflict strata in the sense of bifurcation theory. Indeed, a point $t=(t_1,\ldots ,t_{\mu-1})$ is in $\B_{\mu-1}\setminus \Delta$ if and only if the corresponding function $F_t=F(z,t)\colon\R^n\to \R$ is an excellent\footnote{Meaning Morse function with distinct critical values.} Morse function in $\B_n$, all of whose Morse singularities tend to $0$ as $t\to 0$. In particular the Maxwell set, which corresponds to functions $F_t$ attaining at least twice their absolute minimum, is contained in $\Delta$ because it is the image of a singular stratum of $D$ (see \cite[5.4.1]{T2} and Michel Coste's examples in \cite{Co}).\par\medskip
We know that the geometry of the complex discriminant hypersurface $D(\C)$ contains important information on the geometry of the hypersurface of $\C^n$  defined by $f(z_1,\ldots ,z_n)=0$, its deformations and in particular its Milnor fiber (see \cite{T2}). The geometry of the discriminant hypersurface is very special. For example, tangent hyperplanes to the discriminant hypersurface at non singular points tending to the origin all have as limit the hyperplane $\lambda=0$ (see \cite[\S 5, Remark 3]{T2}) and as we have seen a general plane section of $D(\C)$ has only cusps and nodes as singularities (see \cite[4.8.2]{T2}).\par
The geometry of the discriminant $D$ in the real case also contains important information. I would like to state two problems concerning this geometry:\par\medskip\noindent
Given $f(z_1,\ldots ,z_n)\in \R[z_1,\ldots ,z_n]$ as above, Michel Herman asked, in the early 1990's, the following question: \par\medskip\noindent \textit{If in a neighborhood of $0$ the family of hypersurfaces $f(z_1,\ldots ,z_n)=\lambda$ is topologically trivial for $\vert\lambda\vert$ small enough, do there exist a neighborhood $U$ of $0$ and an unfolding $f(z_1,\ldots ,z_n)+sg(s,z_1,\ldots ,z_n)$ such that for $s\neq 0$ and small enough, the function $f(z_1,\ldots ,z_n)+sg(s,z_1,\ldots ,z_n)$ has no critical point in $U$?}\par\medskip
The geometric translation of this statement is that under the hypothesis of topological triviality, for a suitable representative of the germ \break${\mathbf F}\colon (\R^n\times \R^{\mu-1},0)\to (\R\times \R^{\mu-1},0)$, the map $p\circ\nu\colon C\to \B_{\mu-1}$ which we have seen above is not surjective. Indeed, if that is the case the complement of the image of $C$ being semialgebraic we can find (see \cite[Theorem 2.2.5]{B-C-R}) in that complement a germ of a semialgebraic arc $t_1(s),\ldots, t_{\mu-1}(s)$ with $t_j(0)=0$, which will give the unfolding we seek. The converse follows from the versality of the unfolding.\par\medskip\noindent
From the equations of the critical locus, we see that it can be endowed with coordinates $z_1,\ldots ,z_n, t_{n+1},\ldots ,t_{\mu-1}$ and then the map $p\circ \nu\colon C\to \R^{\mu-1}$ can be written as follows:
\begin{align*} t_j\circ(p\circ \nu)=&-\Bigl(\frac{\partial f}{\partial z_j}(z_1,\ldots ,z_n)+\sum_{k=n+1}^{\mu-1}t_k\frac{\partial g_k}{\partial z_j}(z_1,\ldots ,z_n)\Bigr)\ {\rm for}\ 1\leq j\leq n,\cr
t_j\circ(p\circ \nu)=&\ t_j\ {\rm for}\ \ n+1\leq j\leq \mu-1.\cr
\end{align*}
The jacobian matrix of the map $p\circ \nu$ is therefore related to the hessian matrix $H_z(F)$ of $F$ with respect to the variables $z_1,\ldots ,z_n$ as follows:
\[{\rm Jac}(p\circ \nu)=\left(\begin{array}{cc}-H_z(F)&(-\frac{\partial g_k}{\partial z_j})\\
\mathbf{0}&\mathbf{Id}_{\mu-1-n}\end{array}\right)\]
Taking determinants gives :
$${\rm jac}(p\circ\nu)=(-1)^nh_z(F),$$
and considering signs gives, at each point of $C$ where ${\rm jac}(p\circ\nu)\neq 0$,
$${\rm sign}({\rm jac}(p\circ\nu))=(-1)^n(-1)^{{\rm index}H_z(F)}.$$
For $t\in\R^{\mu-1}\setminus \Delta$, let $N_i(t)$ be the number of critical points of index $i$ of the Morse function $F_t$ on $\B_n$. Then by definition of the local topological degree (see \cite{E-L}), we have the equality

$${\rm deg}(p\circ\nu)=(-1)^n\sum_{i=0}^n(-1)^iN_i(t),$$
which is independent of $t\in\B_{\mu-1}\setminus\Delta$.\par
As $t$ approaches the origin, the discriminant $D$ flattens towards the hyperplane $\lambda=0$ (see \cite[5.5]{T2}). We shall only use the fact that for $t\in p(D)$ the line $(\lambda, t),\lambda\in\R$, has a maximum intersection point $(\lambda_{\rm max}(t),t)$ and a minimum intersection point $(\lambda_{\rm min}(t),t)$ with $D$, which both tend to $0$ with $t$, and $D$ does not contain the $\lambda$ axis. If we denote by $X_{\lambda, t}$ the fiber ${\mathbf F}^{-1}(\lambda,t)\subset \B_n\times\{t\}$, we note that since we may assume the $X_{\lambda, t}$ meet $\partial\B_n\times\{t\}$ transversally, all the fibers $X_{\lambda, t}$ for $\lambda>\lambda_{\rm max}(t)$ (resp $\lambda<\lambda_{\rm min}(t)$) are diffeomorphic to $X_{\lambda, 0}$ with $\lambda>0$ (resp $X_{\lambda, 0}$ with $\lambda<0$).\par\noindent  By a direct application of Morse theory (see \cite[Chapitre 13, exerc. 2.12]{G} and \cite[Lemma]{A}), we have for small enough $\epsilon>0$ the following relations between Euler-Poincar\'e characteristics  :
\[\begin{array}{ccc}\chi (X_{\lambda_{{\rm max}}+\epsilon,t})-\chi (X_{\lambda_{{\rm min}}-\epsilon,t})=&2\sum_{i=0}^n(-1)^iN_i(t)&{\rm if}\ n \ is \ {\rm odd}\\
\chi (X_{\lambda_{{\rm max}}+\epsilon,t})-\chi (X_{\lambda_{{\rm min}}-\epsilon,t})=&0\ \ \ \ \ 
\ \ \ \  \ \ \ \ \ \ \ \ \ \ \ &\ {\rm if}\ n \ is \ {\rm even}.\end{array}\]
One can verify that if the family $f(z_1,\ldots ,z_n)=\lambda$ is topologically trivial for $\vert\lambda\vert$ small enough, so is the family $f(z_1,\ldots ,z_n)+w^2=\lambda$ and as we saw, adding squares of new variables does not change the geometry of the miniversal unfolding.\par\noindent Topological triviality of the $X_{\lambda, 0}$ implies $\chi (X_{\lambda_{{\rm max}}+\epsilon,t})-\chi (X_{\lambda_{{\rm min}}-\epsilon,t})=0$, so that we have:
\begin{proposition}For all $n$ the hypothesis of local topological triviality of the family $f(z_1,\ldots ,z_n)=\lambda$ implies that the local topological degree of the map $p\circ\nu\colon C\to \B_{\mu-1}$ is zero.
\end{proposition}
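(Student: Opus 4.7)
The plan is to combine the two Morse-theoretic identities displayed just before the proposition with the Euler-characteristic vanishing that has been deduced from topological triviality, splitting into cases according to the parity of $n$.

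First, suppose $n$ is odd. The displayed equality for odd $n$ reads
\[
\chi (X_{\lambda_{{\rm max}}+\epsilon,t})-\chi (X_{\lambda_{{\rm min}}-\epsilon,t})=2\sum_{i=0}^n(-1)^iN_i(t).
\]
By the paragraph preceding the proposition, topological triviality of the family $f=\lambda$ forces the left-hand side to be zero, because $X_{\lambda_{\max}+\epsilon,t}$ is diffeomorphic to $X_{\lambda,0}$ for some $\lambda>0$ and $X_{\lambda_{\min}-\epsilon,t}$ to $X_{\lambda,0}$ for some $\lambda<0$, while topological triviality makes these two fibres diffeomorphic. Dividing by $2$ and multiplying by $(-1)^n$ we obtain, via the formula $\deg(p\circ\nu)=(-1)^n\sum_i(-1)^iN_i(t)$, that the local topological degree vanishes.

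Next, suppose $n$ is even. Here the Morse-theoretic identity for even $n$ is automatically $0=0$ and gives no direct information, so one stabilises by a square. Replace $f(z_1,\ldots,z_n)$ by $\tilde f(z_1,\ldots,z_n,w)=f(z_1,\ldots,z_n)+w^2$, which has $n+1$ variables (odd), the same Milnor number $\mu$, and a miniversal unfolding $\tilde F$ whose critical locus $\tilde C$ is identified with $C$ via the equation $w=0$ coming from $\partial \tilde F/\partial w = 2w = 0$. Each critical point of $F_t$ of index $i$ gives a critical point of $\tilde F_t$ of the same index $i$ (since adding $+w^2$ contributes one positive eigenvalue to the Hessian), so the index counts satisfy $\tilde N_i(t)=N_i(t)$ for $0\le i\le n$ and $\tilde N_{n+1}(t)=0$. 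As recalled in the paragraph preceding the proposition, topological triviality of $f=\lambda$ transfers to topological triviality of $\tilde f=\lambda$, so the odd case already proved applies to $\tilde f$ and yields $\deg(\tilde p\circ\tilde\nu)=0$. But
\[
\deg(\tilde p\circ\tilde\nu)=(-1)^{n+1}\sum_{i=0}^{n+1}(-1)^i\tilde N_i(t)=-(-1)^n\sum_{i=0}^n(-1)^iN_i(t)=-\deg(p\circ\nu),
\]
so $\deg(p\circ\nu)=0$ as well.

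The only subtle point is the even case, and the obstacle there is purely bookkeeping: one must verify that the stabilisation $f\mapsto f+w^2$ is compatible with the miniversal unfolding in the sense that the critical-locus maps $p\circ\nu$ and $\tilde p\circ\tilde\nu$ are related as stated, so that their local topological degrees differ precisely by the sign $(-1)^{n+1}/(-1)^n=-1$. This is the content of the remark that adding squares of new variables does not change the geometry of the miniversal unfolding, and was already used at the outset of the section to reduce to the case where $f$ has order $\ge 3$.
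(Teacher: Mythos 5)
Your proof is correct and follows essentially the same route as the paper: the odd case is read off from the Morse-theoretic identity $\chi (X_{\lambda_{{\rm max}}+\epsilon,t})-\chi (X_{\lambda_{{\rm min}}-\epsilon,t})=2\sum_i(-1)^iN_i(t)$ combined with the vanishing of the left-hand side under topological triviality, and the even case is handled exactly as the paper indicates, by stabilizing to $f+w^2$ and using that adding a square changes neither the triviality hypothesis nor the geometry of the miniversal unfolding. The only quibble is your intermediate relation $\deg(\tilde p\circ\tilde\nu)=-\deg(p\circ\nu)$: since $w\equiv 0$ in the Jacobian algebra of $f+w^2$, the natural identification $\tilde C\cong C$ (with the same coordinates $z_1,\ldots,z_n,t_{n+1},\ldots,t_{\mu-1}$) makes the two maps and hence the two degrees literally equal, so the sign $(-1)^{n+1}$ reflects an orientation convention rather than the paper's computation --- but this is harmless, because what your argument really extracts is $\sum_i(-1)^iN_i(t)=0$, which forces $\deg(p\circ\nu)=0$ in either convention.
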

And what we want to prove is that this map is not surjective.\par\medskip\noindent
When $n=2$ the result was proved by Gusein-Zade in \cite{GZ} using an ingenious argument to construct explicit unfoldings without critical points using induction on the Milnor number and resolution of singularities of curves.\par\noindent
In \cite{T4} it was suggested to use elimination of critical points as in the proof of the $h$-cobordism theorem (see \cite{Mi}, \cite{C-G}). In other words, is the condition $\sum_{i=0}^n(-1)^iN_i(t)=0$ sufficient to make it possible to eliminate all the critical points of a Morse function $F_t$ by moving $t$ in $\B_{\mu-1}$?  More generally, we wish to ask the question:\par\medskip\noindent
\textbf {Problem D:} \textit{Does the geometry of the discriminant $D$ reflect the various configurations of critical points of Morse functions which can appear in differential geometry: For example, can one find values of $t\in \B_{\mu-1}$ such that all the critical points of the same index of $F_t$ are at the same level  (have the same critical value)? Can one describe the obstruction to performing elimination of critical points of the functions $F_t$ by movements of $t\in \B_{\mu-1}$?}
\par\medskip\noindent
For example, it is explained in \cite{T3} that if one can find values of $t\in \B_{\mu-1}$ such that all non degenerate local minima of $F_t$ (stable attractors) are at the same level, one obtains a proof of Thom's catastrophe-theoretic version of the Gibbs phase rule. It states that the maximum number of local minima which a Morse function $F_t$ can have in $\B_n$ (the number of coexisting phases of the system) is at most the codimension in $\R^{\mu-1}$ of the Thom stratum $T_0$ of the origin, plus one. Since along the Thom stratum the morphology does not vary, the coordinates of a space transversal to $T_0$ and of complementary dimension in $\B_{\mu-1}$ are the essential parameters of the system.\par
There are some indications towards a possible proof in \cite{T4}. Since the hessian matrix of $F$ is a well defined matrix valued and very special function on the critical locus $C$, it may be that the study of its image in the Grothendieck-Witt ring, as in \cite{P-W}, is useful. Geometrically, the question is whether the closures of all the open sets of the discriminant hypersurface $D$ which are the images by $\nu$ of the sets of points of $C$ where the index of $H_z(F)$ has a given value, have to intersect and whether the closures of some other sets with different indices have to meet in codimension one cusp-type components of the singular locus of $D$. Two dimensional slices of the discriminant transversal to such components correspond to plane configurations studied by Jean Cerf (Cerf diagrams, see \cite{Ce}).\par\noindent From this viewpoint, we are interested in a dynamical version of Problem \textbf{D}: \textit{the problem is to understand which among the deformations of functions that are used in differential geometry, for example those used in the $h$-cobordism theorem (see \cite{Mi} and \cite{C-G}), can be realized by the ``small'' movements of $t$ in $\B_{\mu-1}$.}

  \end{document}